\newtheorem{teo}{Theorem}[section]
\newtheorem{prop}{Proposition}[section]
\newtheorem{cor}{Corollary}[section]
\newcommand{\appsection}[1]{\let\oldthesection\thesection
\renewcommand{\thesection}{Appendix \oldthesection}
\section{#1}\let\thesection\oldthesection}
\newcommand{\real}{\mathbb{R}}
\newcommand{\var}{\mathrm{Var}}
\newcommand{\te}{\theta}
\newcommand{\na}{\nabla}
\title{Reference Bayesian analysis for hierarchical models}
\author{Tha\'is C. O. Fonseca \thanks
{Department of Statistics, Universidade Federal do Rio de Janeiro, RJ, Brazil,  thais@im.ufrj.br} \and Helio S. Migon \thanks{Department of Statistics, Universidade Federal do Rio de Janeiro, RJ, Brazil,  migon@im.ufrj.br} \and Heudson Mirandola \thanks{Department of Mathematics, Universidade Federal do Rio de Janeiro, RJ, Brazil,  mirandola@im.ufrj.br}}
\date{}
\begin{document}
\maketitle
\abstract
This paper proposes an alternative approach for constructing invariant Jeffreys prior distributions tailored for hierarchical or multilevel models. In particular, our proposal is based on a flexible decomposition of the Fisher information for hierarchical models which overcomes the marginalization step of the likelihood of model parameters. The Fisher information matrix for the hierarchical model is derived from the Hessian of the Kullback-Liebler (KL) divergence for the model in a neighborhood of the parameter value of interest. Properties of the KL divergence are used to prove the proposed decomposition. Our proposal takes advantage of the hierarchy and leads to an alternative way of computing Jeffreys priors for the hyperparameters and an upper bound for the prior information. While the Jeffreys prior gives the minimum information about parameters, the proposed bound gives an upper limit for the information put in any prior distribution. A prior with information above that limit may be considered too informative.
From a practical point of view, the proposed prior may be evaluated computationally as part of a MCMC algorithm. This property might be essential for modeling setups with many levels in which analytic marginalization is not feasible.
We illustrate the usefulness of our proposal with examples in mixture models, in model selection priors such as lasso and in the Student-t model. 

Keywords: Invariant prior, Fisher Information, Kullback-Leibler divergence. 

\section{Introduction}
There has been an increasing interest over the last decade in the construction of flexible and computational efficient models such as neural networks, clustering, multilevel, spatial random effects and mixture models. These models are setup in an hierarchical fashion which naturally describe several real data characteristics. The hierarchy allows for flexibility and complexity is take into account by adding extra levels in the model. In this context, subjective prior elicitation for the hyperparameters is not always trivial as parameters are very often defined in low levels of the hierarchy and lack practical interpretation. Frequently, the parameters are calibrated or estimated using empirical Bayes approaches. Recent work has focus attention in the specification of hyperpriors for parameters in low levels of the hierarchy such as the penalizing complexity priors of \cite{Rue17} which is weakly informative and penalizes parameter values far from the base model specification. In the context of full Bayesian analysis, objective prior specifications may be considered instead of calibration, empirical Bayes or weakly informative priors. 

Consider $y$ the vector of observed responses and a probabilistic parametric model $\mathcal{M}=\{f(\,\cdot\,\mid \theta)\ ;\ \theta\in \Theta\}$. Direct inference about $\theta$ using the integrated likelihood $f$ is not always trivial and levels of hierarchy are often introduced in the modelling to allow for feasible inferences regarding $\theta$ due to conditional independence given the latent variables. As follows, we assume that the mechanism which has generated the data is a data augmentation process: given $\theta$, a value of $w$ is selected from $f_2(w\mid \theta)$ and, given $w$, a value of $y$ is selected from $f_1(y\mid \theta,w)$. The model for $w\in \real^d$ depends on the application and is often chosen to allow easier inferences about $\theta$ or due to easier interpretability. In this setup the model has two elements: the extended likelihood $f_1(y\mid w,\theta)$ and the model for the latent variable $f_2(w\mid \theta)$. The marginal likelihood for $\theta$ given the data is obtained by integration
\begin{equation}\label{eqmix}
f(y\mid \theta)=\int f_1(y\mid \theta,w)d F_2(w\mid \theta).
\end{equation}
with $F_2$ the cumulative distribution associated with $f_2$. Notice, however, that the integration step is not usually desirable as in general flexible hierarchical modelling interest lies in making inference for the hidden effects $w$ as well as the parameters $\theta$. The introduction of latent variables in the inferential problem brings great benefits as very often the complete conditional distributions of $w\mid \theta,y$ have nice explicit forms. In an iterative algorithm as the one proposed by \cite{Tan87}, a sample is obtained from $p(w\mid \theta , y)$ and a sample from $p(\theta\mid w,y)$ is obtained conditional on the sampled value of $w$. Both complete conditional distributions are often easy to sample from.

Another important aspect of the hierarchical approach is that the model is usually a flexible version of a base model as discussed in \cite{Rue17}. This naturally leads to ill behaved likelihoods as the lower levels in the hierarchy may converge to a constant and the model converges to the base model. This may happen with quite high probability if the sample size is not too large. This bad behaviour of the likelihood function is not corrected by reparametrization and the use of informative priors will mean that the inference made a priori and a posteriori will be approximately the same. Examples with this characteristic are the Student-t model \citep{FernSte99,Fons08}, the skewed normal distribution \citep{Liseo06}, mixture models \citep{Gir88,Robert18}, the hyperbolic model \citep{Fons12}. In these cases, the use of Jeffreys priors correct the ill behaved likelihood and provide better frequentist properties than the ones achieved by using informative priors.

The pioneer attempts to provide default inferences were based on setting uniform prior for unknown parameters \citep{Bayes1763}.
However, these proposals are not invariant to transformations. \cite{Jef46} introduced invariant priors based on the Riemannian geometry of the statistical model. In particular, the divergence considered for prior derivation were the squared Hellinger and the Kullback-Leibler distances. Both divergences behave locally as the Fisher information. More recently, \cite{Geo93} considered a large class of invariant priors based on discrepancy measures. The motivation of these authors was to think in terms of the parametric family $\mathcal{M}$ instead of the parameter domain $\Theta$. Thus, the prior weight assigned to a neighbor value of $\theta$, say $\theta^*$, depends directly on the discrepancy between the parametric family members $f(\cdot\mid \theta)$ and $f(\cdot\mid \theta^*)$.

A way to measure information in an inferential problem is through discrepancy functions such as Fisher information. In the context of hierarchical models, it is natural that the Fisher information about $\theta$ obtained from the complete data problem $(y,w)$ must be larger than the one obtained from the incomplete data problem $y$. Indeed, if $w$ and $y$ were both known it would be easier to make inference about $\theta$. 

In this work, we present a Fisher information decomposition for a general hierarchical problem which allows for computing the information about $\theta$ in the incomplete model $f$ and in the extended model $f_1$ and $f_2$ overcoming the marginalization step. From the proposed decomposition we obtain an alternative way to compute the Jeffreys prior for $\theta$ and an upper bound for this prior. The Jeffreys rule prior gives the minimum prior information for inference about $\theta$. On the other hand, any prior which gives more information than the upper bound may be too informative. 

Section 2 proposes the Fisher information decomposition, the upper bound for the Jeffreys prior of $\theta$ and an alternative way to compute the usual Jeffreys rule prior for an hierarchical model. Some hierarchical models, often  explored in the current literature are presented in the following sections. In section 3 the discrete mixture model \citep{Gir88} is presented and the Jeffreys prior is obtained without the usual marginalization step. An alternative proof for integrability of the resulting Jeffreys prior is also presented. Section 4 discusses the Student-t model with unknown degrees of freedom. The model is written as a Gaussian mixture model and the Jeffreys rule prior is obtained directly for the hierarchical model. Section 5 presents Jeffreys prior for Lasso parameter in a regression model also overcoming the marginalization step.

\section{Fisher matrix decomposition and proposed reference prior}

We recall that the Fisher information matrix for a probabilistic parametric model $\mathcal{M}=\{f(\,\cdot\,\mid \theta)\ ;\ \theta\in \Theta\}$ is defined by the expected matrix of the observed information, 
\begin{equation}\label{fishermatrix}
I_y(\theta)_{ij}=-E_{f(y\mid \theta)}\left[\frac{\partial^2}{\partial \theta_i\theta_j}\log f(y\mid \theta)\right ].
\end{equation}
It can also be seen as the variance of the score function, i.e., $I(\te) = \var_y[\na_\te \log f(y\mid \te)]$. 

In many situations, analytical or even numerical expressions for the Fisher information matrix is prohibitive. Moreover, the use of Monte Carlo methods for approximate \eqref{fishermatrix} can yield high variance. So, in order to reduce variance, we can use the Rao-Blackwell theorem, by carrying out analytical computation as much as possible. Based on this, we propose a Rao-Blackwell-type theorem to be used as an alternative method to compute the Fisher information matrix on hierarchical structures of the model (\ref{eqmix}). For the next result, we consider
\begin{enumerate}[(i)]
\item $I_y(\theta\mid w)$ the Fisher information matrix of the extended likelihood $f_1(y\mid w,\theta)$;
\item $I_w(\te)$ the Fisher information matrix of the latent variable $f_2(w\mid \theta)$;
\item $I_{y,w}(\te)$ the Fisher information matrix of the complete model $f(y,w\mid \te)$;
\item $I_{w}(\te\mid y)$ the Fisher information matrix of the complete conditional $f(w\mid \te,y)$.
\end{enumerate}
We obtain the following
\begin{teo}\label{FIM-hierarchical} The Fisher information matrices of the hierarchical model \eqref{eqmix} can be decomposed as 
\begin{equation}\label{FIM-decomposition}
I_{y,w}(\theta) = I_y(\theta) + E_{y}[I_w(\theta\mid y)] = I_w(\theta) + E_{w}[I_y(\theta\mid w)].
\end{equation}
\end{teo}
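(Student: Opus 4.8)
The plan is to read each of the Fisher matrices involved off the Hessian of an associated Kullback--Leibler divergence and then to combine the two factorizations of the complete-data density. Recall that for any regular family $\{g(\cdot\mid\theta)\}$ one has $I^{g}(\theta)=\na^{2}_{\theta^{*}}\,KL\big(g(\cdot\mid\theta)\,\|\,g(\cdot\mid\theta^{*})\big)\big|_{\theta^{*}=\theta}$, where $KL$ denotes the Kullback--Leibler divergence and $\na^{2}_{\theta^{*}}$ the Hessian in the second argument. Applying this identity to $f(\cdot,\cdot\mid\theta)$, to $f(\cdot\mid\theta)$, to $f_{2}(\cdot\mid\theta)$, and fibrewise (i.e.\ for fixed $w$, resp.\ fixed $y$) to $f_{1}(\cdot\mid w,\theta)$ and to $f(\cdot\mid\theta,y)$ produces $I_{y,w}(\theta)$, $I_{y}(\theta)$, $I_{w}(\theta)$, $I_{y}(\theta\mid w)$ and $I_{w}(\theta\mid y)$ respectively.

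Next I would invoke the chain rule for the Kullback--Leibler divergence applied to the two factorizations $f(y,w\mid\theta)=f_{2}(w\mid\theta)\,f_{1}(y\mid w,\theta)=f(y\mid\theta)\,f(w\mid\theta,y)$. This gives
\[
KL\big(f(\cdot,\cdot\mid\theta)\,\|\,f(\cdot,\cdot\mid\theta^{*})\big)=KL\big(f_{2}(\cdot\mid\theta)\,\|\,f_{2}(\cdot\mid\theta^{*})\big)+E_{w}\Big[KL\big(f_{1}(\cdot\mid w,\theta)\,\|\,f_{1}(\cdot\mid w,\theta^{*})\big)\Big]
\]
and, symmetrically,
\[
KL\big(f(\cdot,\cdot\mid\theta)\,\|\,f(\cdot,\cdot\mid\theta^{*})\big)=KL\big(f(\cdot\mid\theta)\,\|\,f(\cdot\mid\theta^{*})\big)+E_{y}\Big[KL\big(f(\cdot\mid\theta,y)\,\|\,f(\cdot\mid\theta^{*},y)\big)\Big],
\]
where the outer expectations $E_{w}$ and $E_{y}$ are taken under $f_{2}(\cdot\mid\theta)$ and $f(\cdot\mid\theta)$, i.e.\ at the fixed value $\theta$ and \emph{not} at $\theta^{*}$. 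Taking $\na^{2}_{\theta^{*}}$ of both displays and evaluating at $\theta^{*}=\theta$, the left-hand sides become $I_{y,w}(\theta)$; the Hessian passes through the outer expectations precisely because their distributions do not depend on $\theta^{*}$; and the inner divergences, whose first arguments stay pinned at $\theta$, contribute $I_{y}(\theta\mid w)$ and $I_{w}(\theta\mid y)$. Linearity of the Hessian then yields \eqref{FIM-decomposition}.

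I expect the only delicate point to be the analytic justification of differentiating under the integral sign twice --- exchanging $\na^{2}_{\theta^{*}}$ with the outer expectation and with the defining integral of $KL$ --- which needs the usual dominated-convergence and smoothness hypotheses on the hierarchical model, together with the careful observation that in each inner $KL$ term only the second slot is differentiated, so that one genuinely recovers the Fisher information of the conditional family at $\theta$ rather than a mixed second derivative. An entirely equivalent and slightly more pedestrian route bypasses $KL$ altogether: differentiate the two log-factorizations $\log f(y,w\mid\theta)=\log f_{2}(w\mid\theta)+\log f_{1}(y\mid w,\theta)=\log f(y\mid\theta)+\log f(w\mid\theta,y)$ twice in $\theta$, negate, take the expectation under $f(y,w\mid\theta)$, and split it by the tower property $E_{y,w}=E_{w}E_{y\mid w}=E_{y}E_{w\mid y}$, using that $f_{2}$ does not involve $y$ and the marginal $f$ does not involve $w$; the same interchange-of-derivative-and-expectation caveat is again the sole nontrivial ingredient.
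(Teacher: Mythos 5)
Your proposal is correct and follows essentially the same route as the paper: identify each Fisher matrix as the Hessian at $\theta'=\theta$ of the corresponding Kullback--Leibler divergence, apply the KL chain rule to the two factorizations $f(y,w\mid\theta)=f_2(w\mid\theta)f_1(y\mid w,\theta)=f(y\mid\theta)f(w\mid y,\theta)$, and differentiate. Your explicit remark that the outer expectations are pinned at $\theta$ (so the Hessian in $\theta'$ passes through them) is exactly the point the paper uses implicitly, and your regularity caveats are the standard ones left tacit there.
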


For the sake of simplicity, in the expression above, $E_{y}$ denotes $E_{f(y\mid \theta)}$ and $E_{w}$ denotes $E_{f_2(w\mid \theta)}$. If $E_{y}[I_w(\theta\mid y)]$ is not easily computed then we may take advantage of the hierarchy, which leads to $E_{f(y\mid \theta)}[I_w(\theta\mid y)]=E_{f_2(w\mid \theta)}\{E_{f_1(y\mid w,\theta)}[I_w(\theta\mid y)]\}$.\\

To prove Theorem \ref{FIM-hierarchical}, we observe that the Fisher information matrix can be derived from the Kulback-Leibler (KL) divergence. Namely, for any parametric model $p(y\mid \theta)$ it holds 
\begin{equation}\label{expansao_KL}
KL[p(y\mid \theta):p(y\mid \theta+d\theta)] = \frac{1}{2}\,d\theta^T\, I_y(\theta)\, d\theta + O(|d\theta|^3).    
\end{equation}
In other words, the Fisher information matrix $I(\theta) = [I(\theta)_{ij}]$ can be obtained as the Hessian of the KL divergence
\begin{equation}
I(\theta)_{ij} = \frac{\partial^2}{\partial\theta^i\partial\theta^j}KL[p(y\mid \theta) : p(y\mid \theta')]\mid_{\theta'=\theta}. 
\end{equation}

The proposition below is the main ingredient to derive \eqref{FIM-decomposition}. 

\begin{prop}\label{Kullback-leibler_div} Consider the probability distributions $f(y,w)$ and $g(y,w)$, where $y,w$ are random vectors. The Kullback-Leibler divergence satisfy:
\begin{align}\label{KLjoint}
KL[f(y,w) : g(y,w)] &= KL[f(w): g(w)] + E_{f(w)}[KL[f(y\mid w):g(y\mid w)]\nonumber \\ &= KL[f(y):g(y)] + E_{f(y)}[KL[f(w\mid y): g(w\mid y)].
\end{align}
\end{prop}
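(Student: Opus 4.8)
The plan is to prove \eqref{KLjoint} directly from the definition of the Kullback-Leibler divergence, using nothing more than the factorization of a joint density into a marginal times a conditional. Starting from
\[
KL[f(y,w):g(y,w)] = \int f(y,w)\,\log\frac{f(y,w)}{g(y,w)}\,dy\,dw,
\]
I would substitute $f(y,w)=f(w)\,f(y\mid w)$ and $g(y,w)=g(w)\,g(y\mid w)$, which splits the log-ratio additively as
\[
\log\frac{f(y,w)}{g(y,w)} = \log\frac{f(w)}{g(w)} + \log\frac{f(y\mid w)}{g(y\mid w)},
\]
so that the integral breaks into two pieces.

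For the first piece, integrating out $y$ recovers the marginal, $\int f(y,w)\,dy=f(w)$, and the term collapses to $\int f(w)\log\frac{f(w)}{g(w)}\,dw = KL[f(w):g(w)]$. For the second piece, writing again $f(y,w)=f(w)f(y\mid w)$ and applying Fubini to pull the $w$-integration outside gives $\int f(w)\big[\int f(y\mid w)\log\frac{f(y\mid w)}{g(y\mid w)}\,dy\big]\,dw$, which is exactly $E_{f(w)}\big[KL[f(y\mid w):g(y\mid w)]\big]$. This yields the first equality in \eqref{KLjoint}. The second equality is obtained by the identical computation after interchanging the roles of $y$ and $w$, i.e.\ factoring instead as $f(y,w)=f(y)f(w\mid y)$ and $g(y,w)=g(y)g(w\mid y)$; no new idea is needed.

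There is no deep obstacle here — the statement is the chain rule for relative entropy — so the only care required is measure-theoretic bookkeeping. I would assume that $g(y,w)$ dominates $f(y,w)$, which is the natural hypothesis for the left-hand side to be finite and which transfers automatically to the marginals $g(w)$, $g(y)$ and, for $f(w)$-almost every $w$ (resp.\ $f(y)$-almost every $y$), to the conditionals, so that every $KL$ term on the right is well defined. One should also observe that each term in \eqref{KLjoint} is nonnegative, which rules out any $\infty-\infty$ ambiguity and legitimizes both the splitting of the integral and the use of Fubini--Tonelli. With these provisos in place the computation sketched above is complete.
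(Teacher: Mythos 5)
Your proof is correct and follows essentially the same route as the paper: factor the joint densities as marginal times conditional, split the log-ratio additively, and identify the two resulting integrals as the marginal KL and the expected conditional KL, with the second equality obtained by symmetry. The only difference is that you make explicit the measure-theoretic provisos (domination and nonnegativity justifying Fubini--Tonelli) that the paper leaves implicit, which is a welcome but not structurally different addition.
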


Proposition \ref{Kullback-leibler_div} follows by a direct computation. In fact, the Kullback-Leibler divergence of $f(y,w)=f(y\mid w)f(w)$ and $g(y,w)=g(y\mid w)g(w)$ is given by
\begin{align}
KL[f(y,w):&\ g(y,w)] = \int_{\mathcal Y\times \mathcal W} f(y,w)\log(f(y,w)/g(y,w))dydw \nonumber\\
& = \int_{\mathcal W}f(w)\int_{\mathcal Y} f(y\mid w) (\log(f(y\mid w)/g(y\mid w))+\log(f(w)/g(w))\nonumber\\
&= \int_{\mathcal W}f(w)\log(f(w)/g(w)) + \int_{\mathcal W}f(w) \int_{\mathcal Y} f(y\mid w)\log(f(y\mid w)/g(y\mid w))\nonumber\\
&= KL[f(w):g(w)] + E_{f(w)}[KL[f(y\mid w):g(y\mid w)] \label{kl_decomp}
\end{align}
The first term of the third equality holds since $\int_{\mathcal Z}f(z\mid w)dz = 1$. The second equality in \eqref{KLjoint} holds similarly. \qed

Now, we will prove Theorem \ref{FIM-hierarchical}. For the hierarchical model \eqref{eqmix}, we consider the likelihood $f(y,w\mid \theta)$ of $\theta$ given the joint random vector $(y,w)$ and the complete conditional posterior distribution $f(w\mid y,\theta)$. Applying Proposition \ref{Kullback-leibler_div}, we obtain
\begin{align*}
KL[f(y,w\mid \theta):f(y,w \mid \theta')] &= KL[f_2(w\mid \theta):f_2(w\mid \theta')] + E_{w\mid\theta}[KL[f_1(y\mid w,\theta):f_1(y\mid w,\theta')]\\
&= KL[f(y\mid \theta):f(y\mid \theta')] + E_{y\mid \theta}[KL[f(w\mid y,\theta):f(w\mid y,\theta')].
\end{align*}
Taking the Hessian of the above equalities at $\theta'=\theta$, we obtain \eqref{FIM-decomposition}. \qed

We can use Theorem \ref{FIM-hierarchical} to compute the Fisher information matrices of high-level hierarchical models:
\begin{equation}\label{high-level_hier}
\begin{array}{l}
y\sim f(y\mid w_k,\ldots,w_1,\theta),\\ w_k\sim f_k(w_k\mid w_{k-1},\ldots,w_1,\theta),\\
\ldots, \\
w_1\sim f_1(w_1\mid \theta).
\end{array}    
\end{equation}

We write $w=(w_1,\ldots,w_k)$ and $\mathbf{w_t} = (w_1,\ldots,w_t)$, for $t=1,\ldots,k$. Applying recursively Theorem \ref{FIM-hierarchical}, it holds
\begin{align*}
I_{y,w}(\theta) &= I_y(\theta) + E_y[I_{w_1}(\theta\mid y)] + E_{y,\mathbf{w_1}}[I_{w_2}(\theta\mid y,\mathbf{w_1})]+\ldots + E_{y,\mathbf{w_{k-1}}}[I_{w_k}(\theta\mid y,\mathbf{w_{k-1}})] \\
&= I_{w_1}(\theta) + E_{\mathbf{w_1}}[I_{w_2}(\theta\mid \mathbf{w_1})] + \ldots + E_{\mathbf{w_{k-1}}}[I_{w_k}(\theta\mid \mathbf{w_{k-1}})] + E_{w}[I_y(\theta\mid w)].
\end{align*} 

As a particular case, consider the hierarchy model: 
\begin{equation}\label{hierarchi_particular}
y\mid w \sim f_1(y\mid w) \ \mbox{ and } \  w\mid \theta \sim f_2(w\mid \theta).
\end{equation}
Since $f_1(y\mid w)$ does not depend on $\theta$, it holds that $I_y(\theta\mid w) = 0$. Applying Theorem \ref{FIM-hierarchical} we obtain $I_{y,w}(\theta) = I_w(\theta)+E_w[I_y(\theta\mid w)] = I_w(\theta)$. Thus, it holds
\begin{cor}\label{FIM-hierarchical_cor} For the hierarchical model is $y\mid w \sim f_1(y\mid w)$ and $w\mid \theta\sim f_2(w\mid \theta)$, the posterior Fisher information matrices satisfy 
\begin{equation}\label{posteriormean}
I_y(\theta) = I_w(\theta) - E_y[I_w(\theta\mid y)].
\end{equation}
In particular, if we assume the Jeffreys' prior $\pi_w(\theta)$ of the model $w\mid \theta$ is proper then the Jeffreys' prior $\pi_y(\theta)$ of the marginal likelihood distribution $y\mid \theta$ is also proper.
\end{cor}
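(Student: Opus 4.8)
The plan is to split the statement into its two assertions---the identity \eqref{posteriormean} and the propriety claim---and dispatch the first by specializing Theorem \ref{FIM-hierarchical} and the second by a determinant-monotonicity argument.

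First I would establish \eqref{posteriormean}. Since the extended likelihood $f_1(y\mid w)$ in \eqref{hierarchi_particular} carries no dependence on $\theta$, its score with respect to $\theta$ vanishes identically, so $I_y(\theta\mid w)=0$ and hence $E_w[I_y(\theta\mid w)]=0$. Substituting this into the second equality of \eqref{FIM-decomposition} gives $I_{y,w}(\theta)=I_w(\theta)$; combining with the first equality of \eqref{FIM-decomposition}, namely $I_{y,w}(\theta)=I_y(\theta)+E_y[I_w(\theta\mid y)]$, yields $I_w(\theta)=I_y(\theta)+E_y[I_w(\theta\mid y)]$, which rearranges to \eqref{posteriormean}. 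This step is essentially bookkeeping on top of the theorem already proved.

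Next I would prove propriety. The key observation is that every matrix appearing on the right-hand side of \eqref{posteriormean} is a Fisher information matrix or an average of such, hence positive semidefinite: $I_y(\theta)\succeq 0$ as the variance of a score function, and $E_y[I_w(\theta\mid y)]\succeq 0$ as an expectation of positive semidefinite matrices $I_w(\theta\mid y)$. Therefore \eqref{posteriormean} gives the Loewner sandwich $0\preceq I_y(\theta)\preceq I_w(\theta)$ for every $\theta$. I would then invoke the standard fact that the determinant is monotone on the cone of positive semidefinite matrices with respect to the Loewner order: if $0\preceq A\preceq B$ then $\det A\le \det B$ (seen, when $B$ is invertible, by writing $A=B^{1/2}(B^{-1/2}AB^{-1/2})B^{1/2}$ with $B^{-1/2}AB^{-1/2}$ having all eigenvalues in $[0,1]$, so $\det(B^{-1/2}AB^{-1/2})\le 1$; the singular case forces $\det A=\det B=0$ since $v^TBv=0$ and $0\preceq A\preceq B$ imply $Av=0$). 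Consequently $0\le \det I_y(\theta)\le \det I_w(\theta)$, so the unnormalized Jeffreys densities satisfy $|I_y(\theta)|^{1/2}\le |I_w(\theta)|^{1/2}$ pointwise in $\theta$.

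Integrating this pointwise bound over $\Theta$ then gives $\int_\Theta |I_y(\theta)|^{1/2}\,d\theta\le \int_\Theta |I_w(\theta)|^{1/2}\,d\theta<\infty$, the last inequality being the assumed propriety of $\pi_w$, so $\pi_y$ is proper as well. I do not anticipate a serious obstacle: the only delicate point is the determinant-monotonicity lemma, and within it the degenerate case in which $I_w(\theta)$ (and hence $I_y(\theta)$) fails to be invertible, handled by the eigenvalue argument above. A secondary caveat worth stating explicitly is that \eqref{posteriormean} and the interchange of expectation and Hessian implicit in Theorem \ref{FIM-hierarchical} presuppose the usual regularity conditions permitting differentiation under the integral sign, which are assumed throughout.
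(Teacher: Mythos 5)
Your proof is correct and follows essentially the same route as the paper: the identity is obtained exactly as in the text, by noting $I_y(\theta\mid w)=0$ so that $I_{y,w}(\theta)=I_w(\theta)$ and combining with the other half of \eqref{FIM-decomposition}. For the propriety claim the paper gives no explicit argument, but your Loewner-order domination $0\preceq I_y(\theta)\preceq I_w(\theta)$ together with determinant monotonicity is precisely the reasoning the paper relies on implicitly (and uses in the scalar case in Section 3), so you have simply supplied the detail it omits.
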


It is worth to emphasize that both par\^ameters $w$ and $\theta$ can be multivariate. In this case, we can also use of Theorem \ref{FIM-hierarchical} to give a lower bound of the Jeffreys rule prior $\pi_{y}(\theta) \propto \det(I_y(\theta))^{1/2}$ in terms of the hierarchy. Namely, we prove the following:

\begin{teo}\label{teoJeffreysrule} Under the hypotheses of Theorem \ref{FIM-hierarchical}, the usual Jeffreys rule priors satisfy the inequalities as follows. If the par\^ameter $\theta$ is multivariate (dimension $\ge 2$), then \begin{equation}\label{posteriorFisher2} \det(I_{y,w}(\theta))^{\frac{1}{2}} \ge \det(I_y(\theta))^\frac{1}{2} + \det\big(E_{y}[I_w(\theta\mid y)]\big)^{\frac{1}{2}}. \end{equation}
\end{teo}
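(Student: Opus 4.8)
The plan is to turn the statement into a pure matrix inequality and then apply Minkowski's determinant inequality. By Theorem \ref{FIM-hierarchical} we already have $I_{y,w}(\theta) = I_y(\theta) + E_y[I_w(\theta\mid y)]$, so, writing $A := I_y(\theta)$ and $B := E_y[I_w(\theta\mid y)]$, it suffices to prove that $\det(A+B)^{1/2} \ge \det(A)^{1/2} + \det(B)^{1/2}$. The first observation is that $A$ and $B$ are symmetric positive semidefinite matrices of order $n := \dim\theta \ge 2$: indeed $I_y(\theta) = \var_y[\na_\theta \log f(y\mid \theta)]$ is a covariance matrix, the matrix $I_w(\theta\mid y)$ is positive semidefinite for each fixed $y$, and the expectation $E_y[\cdot]$ of a positive-semidefinite-matrix-valued function is again positive semidefinite (a nonnegative average of nonnegative quadratic forms is nonnegative).

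Next I would invoke Minkowski's determinant inequality: for symmetric positive semidefinite $n\times n$ matrices $A,B$ one has $\det(A+B)^{1/n} \ge \det(A)^{1/n} + \det(B)^{1/n}$ (equivalently, $M\mapsto \det(M)^{1/n}$ is concave and positively $1$-homogeneous on the semidefinite cone, hence superadditive: $\det(A+B)^{1/n} = 2\det(\tfrac{A+B}{2})^{1/n} \ge \det(A)^{1/n}+\det(B)^{1/n}$). Setting $a := \det(A)^{1/n}\ge 0$ and $b := \det(B)^{1/n}\ge 0$, this yields $\det(A+B)^{1/2}\ge (a+b)^{n/2}$. It remains to bridge the exponents $1/n$ and $1/2$: since $n\ge 2$ the exponent $p := n/2$ satisfies $p\ge 1$, and for nonnegative reals the power function is superadditive, $(a+b)^{p}\ge a^{p}+b^{p}$ (after normalizing $a+b=1$ this is just $a^{p}\le a$ and $(1-a)^{p}\le 1-a$). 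Combining, $\det(A+B)^{1/2}\ge (a+b)^{n/2}\ge a^{n/2}+b^{n/2}=\det(A)^{1/2}+\det(B)^{1/2}$, which is \eqref{posteriorFisher2}.

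I do not foresee a genuine obstacle; the only delicate point is that Minkowski's inequality is naturally stated with the exponent $1/n$, so one must carry out the extra superadditivity step, and this is exactly where the hypothesis $n\ge 2$ is used — in dimension one the inequality reverses (there $\det$ is the scalar itself and $\sqrt{a+b}\le \sqrt a+\sqrt b$), consistent with the subadditive relation recorded in Corollary \ref{FIM-hierarchical_cor}. If a self-contained argument for small dimension is preferred, the case $n=2$ is precisely Minkowski's inequality at exponent $1/2$ and needs no bridging step, and only $n\ge 3$ requires the power-function estimate.
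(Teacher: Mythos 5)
Your proposal is correct and follows essentially the same route as the paper: apply the decomposition $I_{y,w}(\theta)=I_y(\theta)+E_y[I_w(\theta\mid y)]$, invoke the Minkowski determinant inequality at exponent $1/n$, and bridge to exponent $1/2$ via superadditivity of $t\mapsto t^{n/2}$ for $n\ge 2$. The only differences are cosmetic — you supply short justifications for Minkowski's inequality and for the power-function step that the paper takes for granted.
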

\begin{proof}

Assume the dimension of the parameter space
$\Theta = [\theta]$ is $n\ge 2$. The Minkowski determinant theorem (see for instance Theorem 4.1.8 pg 115 of \cite{Marcus64}) states, for any $n\times n$ symmetric and positive semi-definite matrices $A$ and $B$,
\begin{equation}\label{minkowski}
\det(A+B)^{\frac{1}{n}} \ge \det(A)^{\frac{1}{n}} + \det(B)^{\frac{1}{n}}.
\end{equation}
Fisher information matrices are symmetric and positive semi-definite. By \eqref{FIM-decomposition} and \eqref{minkowski},
\begin{align}\label{ineq_fisher}
\det(I_{y,w}(\theta))^{\frac{1}{n}} &= \det\big(I_y(\theta) + E_{y}[I_w(\theta\mid y)]\big)^{\frac{1}{n}} \nonumber \\&\ge \det(I_y(\theta))^\frac{1}{n} + \det(E_{y}[I_w(\theta\mid y)])^\frac{1}{n}.
\end{align}
And, since $\frac{n}{2}\ge 1$,
\begin{align*}
\det(I_{y,w}(\theta))^{\frac{1}{2}} &= \big(\det(I_y(\theta))^\frac{1}{n} + \det(E_{y}[I_w(\theta\mid y)])^\frac{1}{n}\big)^{\frac{n}{2}}\\ &\ge \det(I_y(\theta))^\frac{1}{2} + \det(E_{y}[I_w(\theta\mid y)])^\frac{1}{2}.     
\end{align*}
Theorem \ref{teoJeffreysrule} is proved.
\end{proof}

The examples below will show, in many other situations, how to conveniently decompose $w$ in order to compute the Fisher information matrix of hierarchical models.

\section{Mixture model with known components}

Mixture models provide a useful representation of several applications such as gene expression, classification, etc. A simple mixture model with known components \citep{Gir88} may be written as
\begin{equation}\label{eqMix}
f(y\mid\theta)=\sum_{k=0}^p\theta_k\; g_k(y),
\end{equation}
with $S^n = \{\theta = [\theta_0,\ldots,\theta_p]^T \mid \theta_k>0 \mbox{ and }\sum_k \theta_k=1\}$ a simplex defining the weights of the mixture and $g_k(y)$, $k=0,\ldots,p$ completely specified density functions with known parameters. We set $w$ assuming values in $\{0,\ldots,p\}$ with categorical distribution $f_2(w=k\mid  \theta)=\theta_k$, with $k\in \{0,\ldots,p\}$. 
For each $w\in \{0,\ldots,p\}$, we fix a probability distribution $f_1(y\mid w) = g_w(y)$. The two-level hierarchical model $f_1(y\mid w)$ and $f_2(w\mid \theta)$ yields the mixture of probabilities $f(y\mid \theta) = \sum_w \theta_w g_w(y)$. The Fisher information matrix of the mixture model $f(y\mid \theta)$ is 
\begin{equation}\label{eqImix}
I_y(\theta)_{ij} = \int (g_i(y) - g_0(y))(g_j(y) - g_0(y))\frac{1}{f(y\mid \theta)}dy, 
\end{equation}
with $i,j=1,\ldots,n$. It is not a simple task to show the properness of Jeffreys' prior $\pi_y(\theta)$ derived from (\ref{eqImix}). In fact, this result was obtained by \citet{Gir88} and \citet{Robert18}. Corollary \ref{FIM-hierarchical_cor} gives an alternative proof of this result as we discuss as follows.

The Fisher information matrix based on the model $f_2(w\mid \theta)$ denoted by $I_w(\theta) = [I_w(\theta)_{ij}]$, with $i,j=1,\ldots,p$, is given by
\begin{equation*}
I_w(\theta)_{ij} = \frac{1}{\theta_0} + \frac{1}{\theta_i}\delta_{ij},
\end{equation*}
where $\delta_{ij}$ is the Kronecker delta. It is well known that the Jeffreys' prior of the categorical model $f_2$ is proper with normalizing constant $\int \det(I_w(\theta))^{\frac{1}{2}}d\theta = \frac{1}{2^{p+1}}\mathrm{vol(S^p_2)} = \frac{\pi^{\frac{p+1}{2}}}{\Gamma(\frac{p+1}{2})}$, where $S^p_2\subset \real^{p+1}$ denotes the round sphere in $\real^{p+1}$ of radius $2$. 
By Corollary \ref{FIM-hierarchical_cor}, the Jeffreys' prior of the incomplete model based on $I_y(\theta)$ is also proper. 

In the simpler model $w\mid \theta \sim Bern(\theta)$ resulting in $I_w(\theta)=\theta^{-1}(1-\theta)^{-1}$. Firstly, we take advantage of the hierarchy to obtain conjugated posterior for $w\mid y,\theta$. The complete conditional posterior distribution of $w$ is 
$$w\mid y,\theta \sim Bern \left ( \frac{\theta g_1(y)}{\theta g_1(y)+(1-\theta)g_2(y)}\right ).$$
The Fisher information of $w\mid \theta, y$ about $\theta$ is given by
$$I_w(\theta\mid y)=\frac{ g_1(y)/\theta}{\theta g_1(y)+(1-\theta)g_2(y)}+\frac{ g_2(y)/(1-\theta)}{\theta g_1(y)+(1-\theta)g_2(y)}+\frac{ (g_1(y)-g_2(y))^2}{[\theta g_1(y)+(1-\theta)g_2(y)]^2}.$$
Expectation of $I_w(\theta\mid y)$ with respect to the model $f(y\mid \theta)$ is obtained computationaly in the estimation algorithm.  
\begin{equation}\label{JefMix}
\pi(\theta)=\left\{ \theta^{-1}(1-\theta)^{-1} - E[I_w(\theta\mid y)]\right \}^{1/2}.
\end{equation}
The resulting prior must be integrable as $I_w(\theta)^{1/2}=\theta^{-1/2}(1-\theta)^{-1/2}$ also is. This result is immediate from Corollary \ref{FIM-hierarchical_cor}.

\section{Two level Hierarchical Gaussian model}

The usual random effect model depends strongly on the estimation of the unknown variances in each level of the hierarchy. 
The signal to noise ratio has been studied in several areas of applications.  
As follows we consider the two level hierarchical random effect model in its simplest version as a prototypical example.
\begin{eqnarray*}
{y}_i &=& {\omega}_i + {\epsilon_{1i}}\\
{\omega}_i & = &  \mu + {\epsilon}_{2i}
\end{eqnarray*}
with $ {\epsilon_{1i}} \sim N[  {0}, \delta^{-1}] $, $ {\epsilon}_{2i} \sim N[ {0}, \phi^{-1}] $ and $(\delta^{-1},\phi^{-1}) \in  \real^+ \times \real^+ $. Assume that $\delta$  is  known.
In this setting $
y\mid  \omega, \delta \sim  N[  {\omega}, \delta^{-1} ]$,
$\omega  | \mu \sim N[ \mu, \phi^{-1}   ]$ and integrating $\omega$ out yields the marginal model 
$ {y } | \phi  \sim N[  \mu , \delta^{-1}+\phi^{-1} ]$. This model may be seen as the simplest version of several hierarchical models which depends on random effect estimation. It is worth noting that  in this example $y \perp  \phi | \omega $, so the second term in the left hand side of the information identity  is null. 

The prior and posterior densities of $\omega$ are both Gaussian distributions with the same general form
$$
 \omega | \phi, y \sim N[a(\phi), A(\phi)]
 $$  with $a(\phi)=0$ and $A(\phi)=\frac{1}{\phi}$ in the prior distribution and $a(\phi)=  \frac{y}{\phi+1}$ and $A(\phi)=\frac{1}{\phi + 1}$ in the posterior distribution, where,  without loss of generality, we suppose that $\mu=0$ and $\delta=1$. The log conditional density  is  
 $$
 l(\phi)= -\frac{1}{2}log(2\pi)-\frac{1}{2} log(A(\phi) )- \frac{1}{2 A} (\omega -a(\phi))^2
 $$
 
 Twice differentiation and expectation in the distribution of $\omega$ results in
 $$E(-l''(\phi))=\frac{1}{2}\left ( \frac{A'(\phi)}{A(\phi)} \right )^2+\frac{(a(\phi))^2}{A(\phi)}$$
 In the prior distribution $A'(\phi)=-\frac{1}{\phi^2}$ and it follows that
$
I_{\omega}(\phi)=\frac{1}{2\phi^2}. 
$
In the posterior distribution $a'(\phi)=-\frac{y}{(1+\phi)^2}$, $A'(\phi)=-\frac{1}{(1+\phi)^2}$ and $
I_{\omega}(\phi\mid y)=\frac{1}{2(1+\phi)^2} + \frac{y^2}{(1+\phi)^3}.
$
Taking expectation in the marginal data distribution leads to
\begin{eqnarray} 
 E_y[I_{\omega}(\phi | y) ] &=&    \frac{ 2+\phi}{2\phi(\phi+1)^2}
   \end{eqnarray}

Finally, applying the proposed Fisher decomposition
\begin{eqnarray}\label{final2levels}
I_y(\phi)= I_w(\phi)-E[I_w(\phi\mid y)] = \frac{1}{2} \left [ \frac{1}{\phi^2} - \frac{\phi+2}{\phi(\phi+1)^2}\right ]
\end{eqnarray}
In this example, the marginal model is easily obtained and usual differentiation of the log likelihood leads to the same result obtained in (\ref{final2levels}). However, for more general random effect models marginalization might be not feasible and this procedure would still apply to obtain the required Fisher information.

\section{Student-t model with unknown degrees of freedom}

Let $y\mid \theta \sim {\rm ST}(\theta)$ be the standard Student-t model with fixed mean and precision and unknown degrees of freedom $\nu$. This model robustfy the Gaussian model by allowing for fatter tails. Several authors have dealt with reference prior specification for the tail parameter $\nu$ \citep{Fons08,Wal14,Rue17}. This model may be rewritten in an hierarchical setting as
\begin{align*}
y\mid w \sim N(0,1/w),\\
w\mid \theta \sim Ga(\theta/2,\theta/2).
\end{align*}
This model has two levels of hierarchy with $\theta$ appearing in the second level only, as in \eqref{hierarchi_particular}. In this case, by Corollary \ref{FIM-hierarchical_cor}, the Fisher decomposition simplifies to
\begin{equation}\label{eqT}
I_y(\theta)=I_w(\theta)-E_y[I_w(\theta\mid y)].
\end{equation}
The first term is obtained from the latent variable prior distribution $f_2(w\mid \theta)$ which is a gamma distribution and twice differentiation and expectation with respect to $w\mid \theta$ leads to 
\begin{equation}\label{Decomp1}
I_{w}(\theta)=\frac{1}{4}\;\psi^{(2)}\left (\frac{\theta}{2}\right )-\frac{1}{2\theta}.
\end{equation}
The second part in (\ref{eqT}) is computed based on the complete condicional posterior distribution of the latent variable $w$ which is $w\mid \theta,y\sim Ga((\theta+1)/2,(y^2+\theta)/2)$. Twice differentiation and taking expectation in the distribution of $w\mid \theta,y$ results in
$$
I_w(\theta\mid y)=\frac{1}{4}\;\psi^{(2)}\left (\frac{\theta+1}{2}\right )+\frac{\theta+1}{2(\theta+y^2)^2}-\frac{1}{\theta+y^2}.
$$
Now we take expectations with respect to the model $f(y\mid \theta)$. For this model the result is analitic and given by
\begin{equation}\label{Decomp2}
E[I_w(\theta\mid y)]=\frac{1}{4}\;\psi^{(2)}\left (\frac{\theta+1}{2}\right )+\frac{\theta+2}{2\theta(\theta+3)}-\frac{1}{\theta+1}.
\end{equation}
Equations (\ref{Decomp1}) and ($\ref{Decomp2}$) together result in the usual Jeffreys rule prior for the degrees of freedom of the Student-t model. The Jeffreys prior is bounded above by the function $p(\theta)=k I_w(\theta)^{1/2}$ which is not integrable. However, $I_y(\theta)\asymp \mathcal{O} (\theta^{-4})$ as $\theta \to \infty$. Thus, the Jeffreys prior for the degree of freedom of the Student-t model is proper.

\section{Hierarchical priors for variable selection in regression analysis}

This example illustrates the use of hierarchical models for variable selection in regression analysis and a convenient use of our proposed Fisher decomposition. 
Consider the usual linear regression model for predictor variables $X$ and responses $y$
\begin{equation}\label{eqReg}
y\mid w_1\sim N(X w_1,\sigma^2 I_n ),
\end{equation}
where $y$ is the $n\times 1$ vector of responses, $X$ is the $n \times p$ matrix of covariates, and $w_1$ is the $n\times 1$ vector of regression coefficients. Consider the problem of variable selection in that context, that is, if $p$ is large it is desirable to find some $w_1$'s equal or close to zero. \citet{Tib96} proposed a method called lasso, least absolute shrinkage selection operator, which is able to produce coefficients exactly equal to zero in regression models. The lasso estimate $\hat w_1$ is defined by
\begin{equation}
\hat w_1=\arg\min\left\{ \sum_{i=1}^{n}\left ( y_i-\sum_{j=1}^{p}w_{1,j}x_{ij}\right )^2 \right \} \mbox{ subject to }\;\sum_{j=1}^{p}|w_{1,j}|\leq t.
\end{equation}
The parameter $t$, the Lasso parameter, controls the amount of shrinkage that is applied to the estimates.  Let $\hat w_{1,j}^0$ be the full least square estimates and let $t_0=\sum_{j=1}^{p}|\hat w_{1,j}^0|$. Values of $t<t_0$ will cause shrinkage of the solutions towards zero. This method aims to improve prediction accuracy and to be more interpretable, as we may focus only in the strongest effects. However, estimation of $t$ is not an easy task, \citet{Tib96} comments that since the lasso estimate is a non-linear and non-differentiable function of the response, it is difficult to obtain an accurate estimate of its standard error.

The lasso constraint $\sum_{j=1}^p |w_{1,j}|\leq t$ is equivalent to the addition of a penalty term $\gamma\sum |w_{1,j}|$ to the residual sum of squares, that is, we should minimize
\begin{equation}
\sum_{i=1}^{n}\left ( y_i-\sum_{j=1}^{p}w_{1,j}x_{ij}\right )^2 + \gamma\sum_{j=1}^p |\beta_j|.
\end{equation}
In the Bayesian context, this is equivalent to the use of independent Laplace priors for the regression coefficients \citep{Park08}, that is,
\begin{equation}\label{LapPrior}
p(w_{1,j})=\frac{\gamma}{2}\exp\left \{-\gamma|w_{1,j}|\right \}.
\end{equation}
We may derive the Lasso estimate as the posterior mode under the prior (\ref{LapPrior}) for the $w_1$`s. The choice of the Bayesian Lasso parameter is also not trivial and several methods has been proposed to estimate this parameter such as cross validation and empirical Bayes, however, these methods are often unstable. It is pointed out by \citet{Park08} that the standard error obtained for the lasso parameter are not fully satisfactory. 

\cite{Mal14} rewrites the Laplace prior for $w_1$ in a convenient hierarchical way which allows for analytical full conditional distributions of the latent variables and model parameters. The Lasso prior is obtained as an uniform scale mixture by considering the conditional setting
\begin{eqnarray}
w_{1,j}\mid w_{2,j} & \sim & Unif(-\sigma w_{2,j},\sigma w_{2,j})\\
w_{2,j} \mid \theta & \sim & Ga (2,\theta)
\end{eqnarray}
As follows we obtain the Jeffreys prior for $\theta$ based on the Fisher decomposition \eqref{hierarchi_particular} which simplifies to
\begin{equation}
I_y(\theta)=I_{w_2}(\theta)-E_y[I_w(\theta\mid y)],
\end{equation}
with $w=(w_1,w_2)$. The Fisher information for the bottom level in the hierarchy is easily obtained as $I_{w_2}(\theta)=\frac{2p}{\theta^2}$. In order to obtain $E_y[I_w(\theta\mid y)]$ we take advantage of the known complete conditional distribution of $w\mid \theta,y$ which is presented in \cite{Mal14} as 
\begin{eqnarray}
w_{1,j}\mid w_{2,j},\theta,y & \sim & N(\hat w_{1}^{OLS},\sigma^2 (X' X)^{-1})\prod_{j=1}^pI(|w_{1,j}|\leq \sigma w_{2,j})\\
w_{2,j} \mid \theta,y & \sim & Exp (\theta),
\end{eqnarray}
with $\hat w_{1}^{OLS}$ the ordinary least squared estimator of $w_1$ in the usual regression model \eqref{eqReg}. The distribution of $w_{1,j}\mid w_{2,j},\theta,y$ does not depend on $\theta$. Thus, $E_y[I_w(\theta\mid y)]$ depend on the distribution of $w_{2,j} \mid \theta,y$ only and is trivialy obtained as $E_y[I_{w}(\theta\mid y)]=\frac{p}{\theta^2}$. The final Jeffreys prior has closed form given by
\begin{equation}
    \pi(\theta)\propto \theta^{-1}.
\end{equation}
Notice that this example illustrates the usefulness of our proposed decomposition in other robust regression models such as the ridge regression, elastic net and horse shoe model.

\section{Hyperbolic model}

This example illustrates the use of our proposed decomposition when all levels of hierarchy depend on the parameter of interest. In particular, we present results for the Hyperbolic model which is an extension of the Gaussian model allowing for asymmetric behavior. The Jeffreys rule prior for this model has been proposed by \cite{Fons12}, however, the prior is only obtained computationally and no propertness of the resulting proposal is proved. 

A random variable is said to have a Hyperbolic distribution with parameters $\alpha$, $\beta$, $\delta$ e $\mu$ when its density is given by
\begin{equation} \label{hyp}
g(y;\alpha,\beta,\delta,\mu)=\frac{\sqrt{\alpha^2-\beta^2}}{2\alpha\delta K_1(\delta\sqrt{\alpha^2-\beta^2})}\exp \left\{ -\alpha\sqrt{\delta^2+(y-\mu)^2}+\beta(y-\mu) \right\}
\end{equation}
with $\;y,\mu\in \Re,\;\delta>0\;e\;|\beta|<\alpha$ and $K_1(.)$ the modified Bessel function of the tird kind with index 1. This model can be written alternatively as a mixture of a Gaussian and a GIG\footnote{If $W \sim GIG(\rho ,\gamma,\kappa)$ then its density is given by
$$c(\rho,\gamma,\kappa)\; w^{\rho -1}\exp\left \{  -\frac{1}{2}(\kappa^2 w^{-1}+\gamma^2 w) \right \},\;\kappa,\gamma\geq 0,\;\; \rho  \in \Re, \;\; w>0,$$
with $c(\rho,\gamma,\kappa)=\frac{(\gamma/\kappa)^{\rho}}{2K_{\rho}(\gamma\kappa)}$. J{\o}rgensen (1982) presents details about this class.} distribution, that is,
\begin{equation} \label{mix}
f(y|\lambda,\alpha,\beta,\delta,\mu)=\int_{0}^{\infty}f_1(y|\mu,\beta,w) f_2(w|\lambda,\delta,\alpha,\beta) dw,
\end{equation}
with $f_1$ the Gaussian distribution with mean $\mu+\beta w$ and variance $w$ and $f_2$ the Generalized Inverse Gaussian distribution (GIG) with parameters 1, $\sqrt{\alpha^2-\beta^2}$ and $\delta$.

In particular, consider the standard Hyperbolic model (\ref{hyp}) for $\mu=0$, $\delta=1$ and $\alpha^2=2\beta^2$ which corresponds to an asymmetric density function for $y$. Let $y\mid \theta \sim {\rm Hyp}(\sqrt{2\theta^2},\theta,1,0)$. This model may be rewritten as
\begin{align*}\label{modHyp1}
y|w,\theta \sim N(\theta w,w),\\
w\mid \theta \sim GIG(1,\theta,1).
\end{align*}

Now we illustrate how to compute the Jeffreys prior using the proposed decomposition when both levels of hierarchy depend on $\theta$. By Theorem \ref{FIM-hierarchical}, the Fisher decomposition is given by
\begin{equation}\label{eqH}
I_y(\theta)=I_w(\theta)-E[I_w(\theta\mid y)] +E[I_y(\theta\mid w)].    
\end{equation}

In this case the latent variable $w$ has a GIG prior distribution and differentiating $\log f_2(w\mid \theta)$ and taking expectations with respect to $w\mid \theta$ leads to
\begin{equation}\label{DecompHyp}
I_{w}(\theta)=S_1(\theta),
\end{equation}
with $S_1(\theta)=\frac{K_{3}(\theta)}{K_1(\theta)}-R_1^2(\theta)$, and $R_1(\theta)=\frac{K_{2}(\theta)}{K_1(\theta)}$. Notice that in this example the first level of hierarchy $f_1(y\mid w, \theta)$ also depends on $\theta$ thus, to obtain the upper bound for the Jeffreys prior, we need to compute $E[I_y(\theta\mid w)]$. In the first level $y$ has a Gaussian distribution, thus differentiating $\log f_1(y\mid w,\theta)$ and taking expectations with respect to $y\mid w,\theta$ leads to
\begin{equation}\label{DecompHyp2}
E[I_{y}(\theta\mid w)]=\frac{1}{\theta}R_1(\theta),
\end{equation}
The resulting upper bound for the Jeffreys rule prior is given by the function
$$\pi^*(\theta)=k \left \{S_1(\theta)+\frac{1}{\theta}R_1(\theta)\right \}^{1/2}.$$
This prior is not integrable and the propertness of the prior distribution must be investigated using the resulting Jeffreys prior. The second term in the right hand side of (\ref{eqH}) is computed based on the complete condicional posterior distribution of the latent variable $w$ which in this model is also a GIG distribution given by
$$w\mid \theta,y\sim GIG(1/2,\sqrt{2\theta^2},\sqrt{y^2+1}).$$ 
Twice differentiation  and taking expectation in the distribution of $w\mid \theta,y$ results in
$$
I_w(\theta\mid y)=2(y^2+1)S_{1/2}(\theta\sqrt{2(y^2+1)}).
$$
Now we take expectations with respect to the model $f(y\mid \theta)$ to obtain the Jeffreys rule prior for the asymmetry parameter in the Hyperbolic model $y\mid \theta \sim {\rm Hyp}(\sqrt{2\theta^2},\theta,1,0)$. 
\begin{equation}\label{JefHyp}
\pi(\theta)=\left\{ S_1(\theta)+\frac{1}{\theta}R_1(\theta) - E[I_w(\theta\mid y)]\right \}^{1/2}.
\end{equation}
The resulting prior has a term that must be computed numerically. Notice that instead of computing $\pi(\theta)$ directly by numerical integration a part of it is obtained analytically which leads to reduction of variance. 

\section{Conclusions and further developments}

This paper presented a convenient Fisher decomposition which facilitates Jeffreys prior computation even when model marginalization is not available analytically. We have illustrated our proposal with well known examples from the literature which either present ill behaved likelihoods or have hyperparameters which are difficult to set informative priors. Many other hierarchical models could be exploit and properties from the resulting prior could be investigated using the proposed Fisher decomposition. In particular, the variable selection models with other mixing distributions other than the Lasso prior could be studied and properties of propertiness of the resulting Jeffreys priors could be proved from our theoretical results. 
\bibliography{ref}
\bibliographystyle{jasa}
\end{document}